\newcommand{\Q}{{\mathbb Q}}
\newcommand{\Pro}{{\mathbb P}}
\newcommand{\Z}{{\mathbb Z}}
\newcommand{\F}{{\mathcal F}}
\newcommand{\E}{{\mathcal E}}
\newcommand{\V}{{\mathcal V}}
\DeclareMathOperator{\PGL}{PGL}
\DeclareMathOperator{\GL}{GL}
\DeclareMathOperator{\For}{For}
\newtheorem{thm}{Theorem}
\newtheorem{prop}[thm]{Proposition}
\newtheorem{lemma}[thm]{Lemma}
\newtheorem{lemma-definition}[thm]{Lemma-Definition}
\newtheorem{cor}[thm]{Corollary}
\theoremstyle{remark}
\newtheorem{remark}[thm]{Remark}
\newtheorem{example}[thm]{Example}
\begin{document}
\numberwithin{thm}{section}
\title[The forgetful map in rational $K$-theory]
{The forgetful map in rational $K$-theory}

\author{William Graham}
\address{
Department of Mathematics,
University of Georgia,
Boyd Graduate Studies Research Center,
Athens, GA 30602
}
\email{wag@math.uga.edu}
\thanks{The author was supported by the National Science Foundation}

\begin{abstract}
Let $G$ be a connected reductive algebraic group acting on a scheme $X$.  
Let $R(G)$ denote the representation ring of $G$, and
$I \subset R(G)$ the ideal of virtual representations of
rank $0$.  Let $G(X)$
(resp.~$G(G,X)$)
denote the Grothendieck group of 
coherent sheaves (resp.~$G$-equivariant coherent sheaves)
on $X$.  Merkurjev proved that if $\pi_1(G)$ is
torsion-free, then
the forgetful map $G(G,X) \to G(X)$ induces an
isomorphism $G(G,X) /I G(G,X)  \to G(X) $.
Although this map need not be an isomorphism if $\pi_1(G)$ has torsion,
we prove that without the assumption on $\pi_1(G)$, the
map $G(G,X)/I G(G,X) \otimes \Q \to G(X) \otimes \Q$ is an isomorphism.
\end{abstract}

\maketitle

\section{Introduction}
Let $G$ be a connected reductive algebraic group acting on a scheme $X$.
The $G$-equivariant coherent sheaves on $X$ are
central to the study of $X$.   These sheaves often have
computable invariants, since the group action allows
the use of tools
such as localization theorems.  Also, equivariant sheaves are an important source
of sheaves on quotients by group actions, since
if a 
quotient $X \to Y$ exists, then the sheaf of invariant sections of
an equivariant sheaf on $X$ is a coherent sheaf on $Y$. 
It is natural to ask which coherent sheaves on $X$ admit
$G$-actions.
One positive result is due to
Mumford, who proved that 
if $G$ is connected and $X$ is normal, and $L$ is any
invertible sheaf on $X$, then some power of $L$ is $G$-linearizable
\cite[Corollary 1.6]{MFK:94}.  On the other hand, 
it is easy to find examples of coherent
sheaves which do not admit $G$-actions.   For example, 
 $\PGL(2)$ acts on $\Pro^1$ but 
the sheaf ${\mathcal O}_{\Pro^1}(1)$
does not admit an action
of $\PGL(2)$ (see \cite[p.~33]{MFK:94}).   

Merkurjev proved that from the point of view of $K$-theory,
there is no obstruction to equivariance, as long as the
fundamental group of $G$ is torsion-free (see \cite{Mer:97}).
Let $G(X)$
(resp.~$G(G,X)$)
denote the Grothendieck group of 
coherent sheaves (resp.~$G$-equivariant coherent sheaves)
on $X$.  There is a forgetful map $G(G,X) \to G(X)$.
Let $R = R(G)$ denote the representation ring of $G$, and $I \subset R$
the augmentation ideal, that is, the ideal of virtual representations
of rank $0$.  The Grothendieck group $G(G,X)$ is an $R$-module.
Merkurjev showed that if $\pi_1(G)$ is torsion-free, then
the forgetful map induces an isomorphism
$$
G(G,X) / I G(G,X) \to G(X).
$$
If $\pi_1(G)$ is not torsion-free, this map can fail
to be an isomorphism.
For example, the fundamental group of $\PGL(2)$ is
$\Z/2\Z$, and the
 class $v = [{\mathcal O}_{\Pro^1}(1)] \in G(\Pro^1)$
is not in the image of $G(\PGL(2),\Pro^1)$.  
However, if
we tensor with $\Q$, this class is in the image.  Indeed, 
$G(\Pro^1) = \Z[v]/ \langle v^2 - 1 \rangle$, so after tensoring
with $\Q$, we have $v = \frac{1}{2}(v^2+1)$.  This element is in the
image of the forgetful map since $v^2$ is the class of ${\mathcal O}_{\Pro^1}(2)$, which 
has a $G$-action.  

This phenomenon holds more generally:

\begin{thm} \label{t.mainthm}
Let $G$ be a connected reductive algebraic group acting on a scheme
$X$.  The forgetful map $G(G,X) \to G(X)$ induces an isomorphism
 $G(G,X) /I G(G,X) \otimes \Q \to G(X) \otimes \Q$.  Hence the map
 $G(G,X) \otimes \Q \to G(X) \otimes \Q$ is surjective.
 \end{thm}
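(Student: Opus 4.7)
The plan is to reduce to Merkurjev's theorem by passing to a cover with torsion-free fundamental group.  I would take $G' := G_{sc} \times Z(G)^0$, where $G_{sc}$ is the simply connected cover of $[G,G]$; the multiplication map $\pi\colon G' \to G$ is then a central isogeny with finite kernel $N$, and $\pi_1(G')$ coincides with the (torsion-free) cocharacter lattice of $Z(G)^0$.  Applying Merkurjev's theorem to $G'$ yields an integral isomorphism $G(G',X)/I'G(G',X) \xrightarrow{\sim} G(X)$, where $I' \subset R(G')$ is the augmentation ideal.

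Since $N$ acts trivially on $X$ and is finite diagonalizable, one obtains $\hat N$-gradings $G(G',X) = \bigoplus_{\chi \in \hat N} G(G',X)_\chi$, $R(G') = \bigoplus_\chi R(G')_\chi$, and $I' = \bigoplus_\chi I'_\chi$, where $I'_\chi$ is the rank-zero part of $R(G')_\chi$.  Inflation identifies the trivial components as $G(G',X)_0 = G(G,X)$, $R(G')_0 = R(G)$, and $I'_0 = I$; multiplication is grading-compatible, so $I'G(G',X)$ is a graded submodule and $G(G,X) \cap I'G(G',X) = IG(G,X) + \sum_{\chi \neq 0} I'_\chi \cdot G(G',X)_{-\chi}$.

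The central ingredient is the existence, for each $\chi \in \hat N$, of a virtual representation $W_\chi \in R(G')_\chi \otimes \Q$ of rank $1$.  I would establish this from the surjectivity of $R(G_{sc}) \to R(Z(G_{sc}))$ (every central character of a simply connected semisimple group is realized by some irreducible representation), tensoring with a character of $Z(G)^0$ extending the given $\chi$ appropriately, and dividing by the resulting dimension.  Given the $W_\chi$, the \emph{surjectivity} of $G(G,X)/IG(G,X) \otimes \Q \to G(X) \otimes \Q$ is immediate: lift $[\F] \in G(X)$ to $\F' \in G(G',X)$ via Merkurjev, decompose $\F' = \sum_\chi \F'_\chi$, and set $\sigma := \sum_\chi W_{\chi^{-1}} \cdot \F'_\chi$.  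Grading considerations put $\sigma$ in $G(G,X) \otimes \Q$, and because each $W_{\chi^{-1}}$ acts on $G(X)$ by its rank, namely $1$, the forgetful image of $\sigma$ equals $\sum_\chi [\F'_\chi] = [\F]$.

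For \emph{injectivity}, I would show $I'_\chi \cdot G(G',X)_{-\chi} \otimes \Q \subset IG(G,X) \otimes \Q$ for $\chi \neq 0$ by the following identity.  Given $r \in I'_\chi \otimes \Q$ and $m \in G(G',X)_{-\chi}$, set $f := r \cdot W_{-\chi} \in R(G) \otimes \Q$; this has rank $0$ and so lies in $I \otimes \Q$.  Set also $i_0 := W_\chi W_{-\chi} - 1 \in R(G) \otimes \Q$, which has rank $0$ and hence lies in $I \otimes \Q$.  Multiplying $r W_{-\chi} W_\chi = r(1 + i_0)$ by $m$ yields $f \cdot (W_\chi m) = rm + i_0 \cdot rm$; since $W_\chi m \in G(G,X)$ and $f \in I$, the left side is in $IG(G,X)$, and $i_0 \cdot rm$ is also in $IG(G,X)$, so $rm \in IG(G,X) \otimes \Q$.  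I expect the main obstacle to be the construction of these rank-$1$ representations $W_\chi$, which depends on the particular structure of the cover $G' = G_{sc} \times Z(G)^0$ and requires understanding how characters of $N$ arise inside $R(G')$.
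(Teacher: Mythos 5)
Your strategy is genuinely different from the paper's: the paper transfers the problem to Chow groups via the equivariant Riemann--Roch isomorphism $\hat{\tau}^G_X$ and then invokes Brion's theorem $CH^*_G(X)/J \simeq CH^*(X)$, whereas you reduce to Merkurjev's integral theorem for the cover $G'=G_{sc}\times Z(G)^0$ and descend along the finite central kernel $N$. Most of your argument is sound: the $\hat N$-grading of $G(G',X)$ and $R(G')$ with trivial components $G(G,X)$ and $R(G)$, the construction of the rank-one elements $W_\chi\in R(G')_\chi\otimes\Q$ (every character of $Z(G_{sc})$ is a central character of an irreducible representation, and characters of $N$ extend to $Z(G')$), the surjectivity argument, and the identity showing $I'_\chi\cdot G(G',X)_{-\chi}\otimes\Q\subseteq IG(G,X)\otimes\Q$ are all correct.

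The gap is the assertion that $I'=\bigoplus_\chi I'_\chi$. The augmentation ideal is \emph{not} graded for the $\hat N$-grading: the rank is additive over the graded components rather than vanishing on each, so for any irreducible $V$ with nontrivial $N$-character $\chi$ the element $V-\dim V\cdot 1$ lies in $I'$ while neither component $V\in R(G')_\chi$ nor $\dim V\cdot 1\in R(G')_0$ does. Consequently $I'G(G',X)$ need not be a graded submodule, and your formula $G(G,X)\cap I'G(G',X)=IG(G,X)+\sum_{\chi\neq 0}I'_\chi\cdot G(G',X)_{-\chi}$ --- which carries the entire content of injectivity --- is unjustified as written; only the inclusion $\supseteq$ is clear. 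The argument can be repaired over $\Q$ using the transfer you already introduce for surjectivity: set $T(m)=\sum_\chi W_{-\chi}m_\chi$, a map $G(G',X)\otimes\Q\to G(G,X)\otimes\Q$ restricting to the identity on $G(G,X)\otimes\Q$. For $r\in I'\otimes\Q$ put $\rho=\sum_\chi W_{-\chi}r_\chi$, which lies in $I\otimes\Q$ because its rank is $\sum_\chi\mathrm{rank}(r_\chi)=0$; then
$$\rho\cdot T(m)-T(rm)=\sum_{\chi,\chi'}\bigl(W_{-\chi}W_{-\chi'}-W_{-\chi-\chi'}\bigr)\,r_\chi m_{\chi'},$$
and since $W_{-\chi}W_{-\chi'}-W_{-\chi-\chi'}\in I'_{-\chi-\chi'}\otimes\Q$ while $r_\chi m_{\chi'}$ has degree $\chi+\chi'$, your key lemma applies to every term. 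Hence $T(rm)\in IG(G,X)\otimes\Q$, so any $\alpha$ in $G(G,X)\otimes\Q\cap I'G(G',X)\otimes\Q$ satisfies $\alpha=T(\alpha)\in IG(G,X)\otimes\Q$, which is the injectivity you need. With this patch your proof goes through.
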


Merkurjev proves his theorem by using a spectral sequence
relating equivariant and ordinary $K$-theory.
The approach taken in this paper is different,
and makes use of Brion's analogue of Theorem
\ref{t.mainthm} for Chow groups, along 
 with the equivariant Riemann-Roch theorem
 proved by Edidin and the author.   This use of Riemann-Roch
 explains the rational coefficients in the statement of our theorem. 
 
 We remark that Theorem \ref{t.mainthm} remains true even if $G$ is not reductive, provided that
 $G$ has a Levi factor $L$ (which is automatic in characteristic
 $0$), since then the forgetful maps from $G$-equivariant $K$-theory
 and Chow groups to the corresponding $L$-equivariant groups
 are isomorphisms.   Also, we expect that 
a topological version of Theorem \ref{t.mainthm} holds
 for equivariantly formal spaces (since for these spaces
 the map from equivariant cohomology to ordinary cohomology
 is surjective).  Finally, the completion theorem of
 \cite{EdGr:07} should have implications
 in this setting.

 \medskip
 
 {\em Conventions:}  We work over an algebraically closed field $k$,
 and assume that the $G$-actions are locally linear---that is, the schemes on which $G$ acts can be 
 covered by $G$-invariant quasi-projective
 open subsets.  This assumption is automatically satisfied for normal schemes.
 (We work in this setting in order to apply Brion's results, which are
 proved under these hypotheses.  We remark that
  that Merkurjev's results, suitably stated, remain valid
 when the ground field is not algebraically closed.)  
 Also, to make use of functorial properties of
 Riemann-Roch (see \cite[Theorem 18.3(4)]{Ful:84})
 we will assume that our schemes can be equivariantly embedded
 in smooth schemes.

\section{Equivariant K-theory, Chow groups, and Riemann-Roch} \label{s.prelim}
In this section we recall some basic facts about K-theory, Chow groups, and
Riemann-Roch, in the equivariant and non-equivariant settings.
We prove a result comparing topologies on equivariant Chow groups,
and also prove a compatibility result between Riemann-Roch and
forgetful maps.  Both of these results are 
used in the proof of the main theorem.   Our main references
for equivariant Chow groups and equivariant Riemann-Roch
will be \cite{EdGr:98} and \cite{EdGr:00}, where more details
can be found.  If $M$ is an abelian group, we write
$M_{\Q} = M \otimes_{\Z} \Q$.  Because we want to index
Chow groups by codimension, we will assume all schemes and
algebraic spaces considered are equidimensional; our results
are valid without this assumption, but we would have to index
Chow groups by dimension.

We begin with some definitions.
Let $G$ be a linear algebraic group acting on an algebraic space
$X$.  Let $G(G,X)$ (resp.~$G(X)$) 
denote the Grothendieck group of $G$-equivariant
coherent sheaves (resp.~coherent sheaves).   There
is a forgetful map
$$
\For: G(G,X) \to G(X)
$$
which takes the class of a $G$-equivariant coherent sheaf
to the class of the same sheaf, viewed nonequivariantly.  
If we need to keep track of the space involved, we will
denote this by $\For_X$.  Note that
$G(G,X)$ is a module for the representation ring $R = R(G)$
of $G$.  Let $I \subset R$ denote the augmentation ideal
(the ideal of virtual representations of rank $0$).  
Let $\widehat{G(G,X)_{\Q}}$ denote the $I$-adic completion of
$G(G,X)_{\Q}$ (not the tensor product with $\Q$ of the 
$I$-adic completion of $G(G,X)$).

Let $CH^i(X)$ denote the codimension $i$ Chow group of $X$;
if $X$ has pure dimension $d$, then $CH^i(X) = A_{d-i}(X)$.
Write $CH^*(X) = \oplus_i CH^i(X)$.  Similarly, let
$CH^i_G(X) = A_{d-i}^G(X)$ denote the ``codimension $i$"
equivariant
Chow group of $X$, and 
$CH^*_G(X) = \oplus CH^i_G(X)$.
By definition, if $V$ is a representation of $G$ and $U$
an open subset of $V$ on which $G$ acts freely, then
$CH^i_G(X) = CH^i((X \times U)/G)$.  
This definition is independent of the
choice of $V$ and $U$ (see \cite{EdGr:98}).
We will denote the mixed space $(X \times U)/G$ by
$X \times^G U$ or $X_G$.  Now, $X$ is
embedded in $X_G$ as a fiber of the map
$X_G \to U/G)$, and pullback along this
embedding gives a map 
$$
\For: CH^i_G(X) \to CH^i(X).
$$  
Note that $CH^*_G(X)$ is a module for the graded ring
$S = CH^*_G(\mbox{pt})$.
Let $J \subset S$ be the ideal spanned by the 
homogeneous elements of $S$ of positive degree.  

The following proposition is similar to \cite[Prop.~2.1]{EdGr:00},
which dealt with the case where $G$ is a subgroup of the group
of upper triangular matrices.  The proof is a minor modification
of that proof.

\begin{prop} \label{p.topologies}
Let $G$ be a connected reductive algebraic group acting on 
an scheme $X$.  Let $N = CH^*_G(X)_{\Q}$.
The topologies on $N$ induced by the two filtrations
$\{ J^n N \}$ and $\{ \oplus_{i \ge n} N^i \}$ coincide.
\end{prop}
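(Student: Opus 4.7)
The proposition asserts that the two filtrations on $N = CH^*_G(X)_\Q$ induce the same topology, which amounts to cofinality in both directions. One direction is routine: since $G$ is connected reductive, $S_\Q = CH^*_G(\mbox{pt})_\Q$ is a polynomial ring on homogeneous generators of positive degree, so $J \subseteq \bigoplus_{i\ge 1} S_\Q^i$ and $J^n \subseteq \bigoplus_{i\ge n} S_\Q^i$. Multiplication by $J^n$ raises the codimension grading of $N$ by at least $n$, giving $J^n N \subseteq \bigoplus_{i \ge n} N^i$.

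The substantive direction is the reverse: given $n$, find $m$ with $\bigoplus_{i \ge m} N^i \subseteq J^n N$. My plan is to reduce to the case of a torus. Fix a maximal torus $T \subset G$ with Weyl group $W$. The pullback identifies $N$ with the $W$-invariants of $N_T := CH^*_T(X)_\Q$. By Chevalley's theorem $S_{T,\Q}$ is finite free over $S_\Q = S_{T,\Q}^W$, so the coinvariant algebra $S_{T,\Q}/ J \cdot S_{T,\Q}$ is a finite-dimensional graded $\Q$-algebra with nilpotent augmentation, yielding an $N_0$ with $J_T^{N_0} \subseteq J \cdot S_{T,\Q}$; iterating gives $J_T^{N_0 n} N_T \subseteq J^n N_T$. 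Given $\alpha \in N = N_T^W$ lying in $J^n N_T$, write $\alpha = \sum j_k \beta_k$ with $j_k \in J^n \subseteq S_\Q^W$; the Reynolds operator $R = \frac{1}{|W|}\sum_w w$ commutes with multiplication by each $j_k$, so $\alpha = \sum j_k R(\beta_k)$ with $R(\beta_k) \in N_T^W = N$, and thus $\alpha \in J^n N$. Hence cofinality for the torus (with $n$ replaced by $N_0 n$) yields cofinality for $N$.

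For $T$ a torus, $S_{T,\Q} = \Q[y_1, \ldots, y_r]$ is generated in degree $1$, so $J_T^{n'} = \bigoplus_{i \ge n'} S_{T,\Q}^i$ as an ideal. I would prove the needed inclusion $\bigoplus_{i \ge m} N_T^i \subseteq J_T^{n'} N_T$ by Noetherian induction on $X$, using the excision sequence $CH^*_T(Z)_\Q \to CH^*_T(X)_\Q \to CH^*_T(X \setminus Z)_\Q \to 0$ for a $T$-invariant closed subscheme $Z \subset X$. The base case is an essentially trivial $T$-action, where $CH^*_T(X)_\Q \cong CH^*(X)_\Q \otimes_\Q S_{T,\Q}$ as graded modules; there $m = n' + \dim X$ works because $CH^j(X)_\Q = 0$ for $j > \dim X$.

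The main obstacle is managing the constant $m$ through the stratification so that the induction closes with a finite bound. I expect this works because each stratum has dimension at most $\dim X$, so the base-case bound propagates with controlled increments---this should be the step flagged as a ``minor modification'' of the upper-triangular argument in \cite{EdGr:00}.
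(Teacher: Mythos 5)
Your first inclusion, $J^nN\subseteq\oplus_{i\ge n}N^i$, is correct and is exactly the paper's one-line argument. The reverse inclusion is where the proposal breaks down: everything is concentrated in the torus-case statement $\oplus_{i\ge m}N_T^i\subseteq J_T^{n'}N_T$, and for that step you offer only a plan (Noetherian induction via excision) and explicitly concede that the key point---propagating the bound $m$ through the stratification so the induction closes---is something you ``expect'' to work. That is the entire content of the proposition, and it is genuinely delicate: under excision a class of codimension $i$ on $X$ supported on a closed invariant $Z$ of codimension $c$ comes from a class of codimension $i-c$ on $Z$, the inductive hypothesis must be formulated uniformly in both $n'$ and the dimension, and one must identify the base strata. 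In effect you are reproving Brion's theorem from scratch. The intended route is much shorter: Brion's isomorphism $N/JN\cong CH^*(X)_{\Q}$ (a standing input to the whole paper, also used in the proof of the main theorem) gives $(N/JN)^p=0$ for $p>d=\dim X$, i.e.\ $N^p\subseteq (JN)^p$ for $p>d$, and iterating this graded identity yields $\oplus_{i\ge r}N^i\subseteq J^nN$ for a suitable $r$ depending only on $n$ and $d$. So the fix is not to complete your induction but to quote Brion and then run a two-line graded-module argument; the ``minor modification'' of \cite[Prop.~2.1]{EdGr:00} refers to this short computation, not to a stratification argument.

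One structural point in your favor: because the fundamental invariants generating $S=CH^*_G(\mbox{pt})_{\Q}$ for reductive $G$ can have degree greater than $1$, the ideal power $J^n$ is strictly smaller than $\oplus_{a\ge n}S^a$ (e.g.\ for $G=SL_2$, $S=\Q[c]$ with $c$ in degree $2$ and $J^n=\oplus_{a\ge 2n}S^a$), so when one iterates $N^p\subseteq\sum_{a\ge 1}S^aN^{p-a}$ the correct conclusion is $\oplus_{i\ge r}N^i\subseteq J^nN$ for $r$ on the order of $d+nD$, $D$ the top degree of a generator---still cofinal, which is all the proposition needs. Your coinvariant-algebra constant $N_0$ with $J_T^{N_0}\subseteq J\cdot S_{T,\Q}$ is precisely the device that handles this, and the Reynolds-operator descent from $J^nN_T\cap N_T^W$ to $J^nN$ is fine (given the citation for $CH^*_G(X)_{\Q}\cong CH^*_T(X)_{\Q}^W$). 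But none of this salvages the proof without the missing input, namely the vanishing of $N/JN$ in degrees above $\dim X$.
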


\begin{proof}
We must show two things.  First, given any $n$, there exists
an $r$ such that $J^r N \subset \oplus_{i \ge n} N^i$.  For this
we may take $n=r$, since $N$ is non-negatively graded and
$J N^i \subseteq N^{i+1}$.  Second, given any $n$, there exists
an $r$ such that $\oplus_{i \ge r} N^i \subseteq J^n N$.
Indeed, Brion proved that $N / JN \simeq CH^*(X)$.  Thus,
$N/JN$ is $0$ in degrees greater than $d = \mbox{dim }X$,
so $N^p = J N^{p-1}$ for $p>d$.  Thus, for $p \ge n+d$,
we have $N^p = J^n N^{p-n}$, so for $r = n+d$,
we have $\oplus_{i \ge r} N^i \subseteq J^n N$, as desired.
\end{proof}

\begin{cor} \label{c.topologies}
Let $G$ be a connected reductive algebraic group acting on 
an scheme $X$.  The $J$-adic completion of $CH^*_G(X)_{\Q}$
is isomorphic to the direct product $\prod_{i = 0}^{\infty} CH^i_G(X)_{\Q}$.
\end{cor}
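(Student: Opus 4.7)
The plan is to deduce the corollary directly from Proposition \ref{p.topologies}. Two filtrations on an abelian group that induce the same topology have canonically isomorphic completions (i.e., the inverse limits of the quotients agree), so by the proposition the $J$-adic completion of $N = CH^*_G(X)_{\Q}$ is canonically isomorphic to the completion with respect to the filtration $F^n N := \oplus_{i \geq n} N^i$. Hence it suffices to identify this second completion with $\prod_{i = 0}^{\infty} CH^i_G(X)_{\Q}$.

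For the filtration by $F^n N$, the quotient is $N/F^n N \cong \bigoplus_{i < n} N^i$, and the transition maps in the inverse system
$$
\cdots \longrightarrow N/F^{n+1} N \longrightarrow N/F^n N \longrightarrow \cdots
$$
are simply the projections that forget the top graded piece. I would then observe that giving a compatible sequence $(a_n)$ with $a_n \in \bigoplus_{i < n} N^i$ is the same as specifying, for each $i \geq 0$, an element $x_i \in N^i$ (by setting $a_n = (x_0, x_1, \ldots, x_{n-1})$). This yields the natural identification
$$
\varprojlim_n N/F^n N \;\cong\; \prod_{i=0}^{\infty} N^i \;=\; \prod_{i=0}^{\infty} CH^i_G(X)_{\Q},
$$
which combined with the previous paragraph gives the corollary.

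There is no real obstacle here: once Proposition \ref{p.topologies} is in hand, the corollary is a matter of unwinding definitions. The only mildly delicate point is the observation that equivalent linear topologies produce canonically isomorphic completions, but this is standard and follows from the universal property of the inverse limit applied to cofinal systems of neighborhoods of $0$.
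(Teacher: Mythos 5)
Your argument is correct and is essentially the same as the paper's: both deduce the corollary from Proposition \ref{p.topologies} by noting that equivalent filtration topologies yield the same completion, and that the completion with respect to the filtration $\oplus_{i \ge n} N^i$ is the direct product $\prod_i CH^i_G(X)_{\Q}$. You merely spell out the inverse-limit computation that the paper leaves implicit.
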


\begin{proof}
Since the completion of $CH^*_G(X)_{\Q}$ with respect to the
topology induced by the
second filtration above is the direct product $\prod_{i = 0}^{\infty} CH^i_G(X)_{\Q}$,
this follows from the preceding proposition.
\end{proof}

In \cite{EdGr:00}, the authors constructed an equivariant
Riemann-Roch
map
$$
\tau_X^G: G(G,X) \to \prod_i CH^i_G(X)_{\Q},
$$
with the same functorial properties as the non-equivariant
Riemann-Roch map $\tau_X: G(X) \to CH^*(X)_{\Q}$
of \cite{Ful:84}.
The equivariant Riemann-Roch map induces an isomorphism
$$
\hat{\tau}_X^G: \widehat{G(G,X)_{\Q}} \to \prod_i CH^i_G(X)_{\Q}.
$$
(In \cite{EdGr:00}, $\hat{\tau}_X^G$ was denoted simply by
$\tau_X^G$.)  Also, there is an equivariant Chern character
map $\mbox{ch}_G: R \to S$ which takes $I$ to $J$
and induces an isomorphism of the $I$-adic
completion $\hat{R}$ of $R$ with the $J$-adic completion $\hat{S}$ of $S$.  
Using $\mbox{ch}_G$
to identify $\hat{R}$ with $\hat{S}$, the functorial properties
of $\hat{\tau}^G_X$ (see \cite[Theorem 3.1(c)]{EdGr:00}) imply
that is an isomorphism
of $\hat{R} = \hat{S}$-modules.

The forgetful maps in $K$-theory and Chow groups are compatible
with the Riemann-Roch maps, by the following proposition.   
Let $\tau_X^{G,i}: G(G,X) \to CH^i_G(X)_{\Q}$
(resp.~$\tau_X^i: G(X) \to CH^i(X)_{\Q}$) be the composition of the map $\tau_X^G$ (resp.~$\tau_X$) with the projection to the component of degree $i$.

\begin{prop} \label{p.forget}
Let $G$ be a linear algebraic group acting on an algebraic space
$X$.  The following diagram commutes:
$$
\begin{CD}
G(G,X)  @> {\tau_X^{G,i}}>>  \prod_i CH^i_G(X)_{\Q} \\
@V{\For}VV                               @VV{\For}V \\
G(X)  @>{\tau_X^i}>> CH^*(X)_{\Q}.
\end{CD}
$$
\end{prop}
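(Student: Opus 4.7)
The plan is to reduce commutativity to the functoriality of the ordinary Riemann-Roch map applied to the inclusion of $X$ as a fiber of a mixed space. Fix the degree $i$. First I would choose a representation $V$ of $G$ and an open subset $U \subset V$ on which $G$ acts freely, with $V \setminus U$ of codimension larger than $i$; forming the mixed space $X_G = X \times^G U$ then gives an identification $CH^i_G(X) = CH^i(X_G)$. By the construction of $\tau^G_X$ in \cite{EdGr:00}, the degree-$i$ component $\tau^{G,i}_X$ agrees, under this identification, with $\tau^i_{X_G}$ applied to the image of $\mathcal{F} \in G(G,X)$ in $G(X_G)$; this image is the sheaf $\tilde{\mathcal{F}}$ on $X_G$ that descends from $\mathcal{F} \boxtimes \mathcal{O}_U$ on $X \times U$.

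Second, I would identify both forgetful maps with a single pullback $i_u^*$ along the fiber inclusion $i_u : X \hookrightarrow X_G$ over a chosen point $\bar u \in U/G$. On Chow groups this is the definition of $\For$, while on K-theory the identity $i_u^* \tilde{\mathcal{F}} = \mathcal{F}$ (as a non-equivariant sheaf) shows that $\For$ factors as the passage from $G(G,X)$ to $G(X_G)$ followed by $i_u^*$.

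Third, I would invoke Fulton's Riemann-Roch formula for l.c.i.\ morphisms (\cite[Theorem 18.3(4)]{Ful:84}) applied to $i_u$. The morphism $i_u$ is a regular embedding because $U/G$ is smooth, and its normal bundle is pulled back from the tangent space $T_{\bar u}(U/G)$, hence trivial as a vector bundle on $X$. The Todd correction is therefore $1$, and the formula reduces to $\tau_X \circ i_u^* = i_u^* \circ \tau_{X_G}$. Combining the three steps yields $\tau^i_X \circ \For = \For \circ \tau^{G,i}_X$.

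The main obstacle is the first step---verifying that the degree-$i$ component of $\tau^G_X$, defined in \cite{EdGr:00} via a limit over mixed-space approximations, actually equals $\tau^i_{X_G}$ for a single sufficiently large $U$. Once this identification and the compatibility of the two forgetful maps with $i_u^*$ are in place, the remainder is a routine application of Fulton's functoriality. The hypothesis from the Conventions that $X$ admits a $G$-equivariant embedding in a smooth scheme ensures that the relevant mixed spaces embed in smooth schemes, so that Fulton's formula applies.
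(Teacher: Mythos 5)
Your overall strategy matches the paper's, but your first step contains a genuine error, which you yourself flag as the main obstacle: the degree-$i$ component of $\tau^G_X$ is \emph{not} $\tau^i_{X_G}$ applied to the descended class $\tilde{\F}$. By the construction in \cite{EdGr:00}, it is the degree-$i$ component of $\tau_{X_G}(\tilde{\F})/\Td(\V)$, where $\V = X \times^G (U \times V)$ is the vector bundle on $X_G$ associated to the fiber $V$. This Todd twist is forced on you by the flat-pullback step $G(G,X) \to G(G, X\times U) \cong G(X_G)$: Riemann--Roch for the flat map $X \times U \to X$ introduces the Todd class of the relative tangent bundle, which descends to $\V$, and dividing by $\Td(\V)$ is what makes the construction independent of the choice of $(V,U)$ and compatible with the completion. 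So the identification you hoped to verify is false as stated, not merely unproved.

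The argument is nevertheless salvageable, and the repair is exactly the paper's: after restricting along the fiber inclusion $k = i_u : X \hookrightarrow X_G$, the bundle $\V$ pulls back to a trivial bundle on $X$, so $k^*(\Td(\V)) = 1$ and the correction disappears. Thus $k^*$ composed with the true degree-$i$ map agrees with $k^* \circ \tau^i_{X_G}$, and your remaining steps (triviality of the normal bundle of $k$, Fulton's Theorem 18.3, and the identification of both forgetful maps with restriction to the fiber) then go through. One smaller point: the identity ``$i_u^*\tilde{\F} = \F$ as a non-equivariant sheaf'' deserves a line of justification; the paper factors $k^! = j^! \circ q^!$ through $X \times U$ and checks that $q^! \circ (q^*)^{-1}$ is the forgetful map $G(G, X\times U) \to G(X \times U)$, after which compatibility of $\For$ with flat pullback along $X \times U \to X$ finishes the computation.
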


\begin{proof}
This can be proved using a change of groups argument
along the lines of \cite[Lemma 4.3]{EdGr:00}.  Here
we give a more direct proof.  
Let $V$ be a representation of $G$ and $U$
an open subset of $V$ on which $G$ acts freely, such that
the codimension of $V - U$ is greater than $i$.  By definition,
$CH^i_G(X) = CH^i(X_G)$, where $X_G = X \times^G U$.  Let $\pi: X \times U \to X$ denote
the projection, and let $q: X \times U \to X_G$ denote the quotient
map.  If $\F$ is a coherent sheaf on
$X_G$, then the pullback sheaf $q^* \F$ on $X \times U$ has a natural
$G$-action.  The assignment $\F \to q^* \F$ gives an
equivalence of categories between the category of coherent 
sheaves on $X \times^G U$ and 
the category of $G$-equivariant coherent sheaves on
$X \times U$ (this follows from Thomason's work \cite{Tho:87}; see
\cite{EdGr:00} for a discussion).  
This equivalence yields an isomorphism
$G(X_G) \to G(G, X \times U)$, which we denote by $q^*$.

Let $u \in U$ and let $v \in U/G$ be the image of $u$.  
Let $j: X \to X \times U$ take $X$ to $X \times \{ u \}$,
and let $k = q \circ j: X \to X_G$.  Then $k$ is the
inclusion of $X$ as the fiber of $X_G \to U/G$ over
$v$.  The normal bundle $N_k$ to $k$ is pulled back from the normal 
bundle to the inclusion of $v$ in $U/G$, so $N_k$ is trivial,
and hence by \cite[Theorem 18.3]{Ful:84},
\begin{equation} \label{e.commute1}
\tau_X \circ k^! = k^* \circ \tau_{X_G}
\end{equation}
as maps $G(X_G) \to CH^*(X)_{\Q}$.

Let $\V$ denote the vector bundle $X \times^G (U \times V) \to 
X_G$.  Define
$$
\rho_U: G(X_G) \to CH^*(X_G)
$$
by
\begin{equation} \label{e.rhoU}
\rho_U (\beta) = \frac{\tau_{X_G}(\beta)}{Td(\V)}.
\end{equation}
Let $\rho^i_U$ be the composition of $\rho_U$ with the projection
onto the $i$-th component.  Then by the definition of the
equivariant Riemann-Roch map (see \cite{EdGr:00}),
$\tau_X^{G,i}$ is the top row of the following diagram:
$$
\begin{CD}
G(G,X) @>{\pi^!}>> G(G, X \times U) 
@>{ (q^*)^{-1}}>>   G(X_G) 
@>{\rho_U^i}>>CH^i(X_G) = CH^i_G(X) \\
 @.  @. @V{k^!}VV    @VV{ k^* = \For }V\\
 @.   @.   G(X)  @>{\tau_X^i}>> CH^*(X).
  \end{CD}
$$
Here $\pi^!$ is the flat pullback in equivariant $K$-theory;
if $\E$ is an equivariant coherent sheaf on $X$ then
$\pi^![\E] = [\pi^* \E]$, where $\pi^* \E$ is the pullback of the sheaf
$\E$.  Also, $k^!$ and $k^*$ are the Gysin morphisms associated
to the regular embedding $k$ (see \cite{Ful:84}).  The pullback along $k$
of the vector bundle $\V$ is trivial, so $k^*(Td(\V)) = 1$.  Hence
\eqref{e.commute1} and \eqref{e.rhoU} imply that the diagram commutes.  
To complete the proof of the proposition, it suffices to show that
\begin{equation} \label{e.commute2}
k^! \circ (q^!)^{-1} \circ \pi^! = \For_X
\end{equation}
as maps $G(G,X) \to G(X)$.  Now, $k^! = j^! q^!$, so the left hand side
of \eqref{e.commute2} is
\begin{equation} \label{e.commute3}
j^! q^! (q^*)^{-1} \pi^!.
\end{equation}
By definition, $(q^*)^{-1}$ takes the class of an equivariant coherent sheaf
$\F$ to the class of a nonequivariant sheaf $\E$ with $q^* \E = \F$.
On the other hand, $q^![\E]$ is the class of $q^* \E$ (viewed
as a nonequivariant coherent sheaf) in $G(X \times U)$.  Thus, the composition
$q^! (q^*)^{-1}$ is $\For_{X \times U}: G(G, X \times U) \to G(X \times U)$.  
Since the forgetful map commutes with flat pullback, \eqref{e.commute3} equals
$$
j^! \circ \For_{X \times U} \circ \pi^! =  j^! \pi^! \circ \For_X
=  (\pi \circ j)^! \circ \For_X= \For_X,
$$
as desired.
\end{proof}

\section{Completions}
The purpose of this section is to prove a simple result
(Lemma \ref{l.completions}) about completions.
This lemma is certainly known (cf.~\cite[p.~247]{Bou:72} 
for finitely generated modules), but because of a lack of a reference
for non-finitely generated modules, a proof is included.

Let $R$ be a Noetherian ring and $I$ an ideal of $R$.  Let $\hat{M}$ denote
the $I$-adic completion of the $R$-module $M$.  
We view $\hat{M}$ as the set of coherent sequences $(m_1, m_2, \ldots )$;
here $m_k \in M/I^k M$, and coherent means that 
for all $k$, the natural map $M/I^{k+1}M \to M/I^k M$ takes 
$m_{k+1}$ to $m_k$.
Since $\hat{I} = I \hat{R}$
\cite[Prop.~10.15]{AM:69}, we
have $\hat{I} \hat{M} = I \hat{R} \hat{M} = I \hat{M}$. 
The composition $M \to \hat{M} \to M/ I \hat{M}$ induces a map
$f: M/IM \to \hat{M}/I \hat{M}$.

\begin{lemma}   \label{l.completions}
Let $R$ be a Noetherian ring and $I$ an ideal of $R$.
For any $R$-module $M$, the map $f: M/IM \to \hat{M} / I \hat{M}$
is an isomorphism.
\end{lemma}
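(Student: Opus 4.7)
The plan is to produce the inverse to $f$ directly. The first-coordinate projection $g \colon \hat{M} \to M/IM$, $(m_1, m_2, \ldots) \mapsto m_1$, is surjective (the composition $M \to \hat{M} \xrightarrow{g} M/IM$ is the usual quotient), and since $g(i \hat{m}) = i\, g(\hat{m}) \in IM$ for $i \in I$, it descends to a map $\overline{g} \colon \hat{M}/I \hat{M} \to M/IM$ with $\overline{g} \circ f = \mathrm{id}_{M/IM}$. Thus $f$ is injective, and the task reduces to showing $\overline{g}$ is injective, i.e., that any $\hat{m} = (m_1, m_2, \ldots) \in \hat{M}$ with $m_1 = 0$ already lies in $I \hat{M}$.

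For this, I will use the Noetherian hypothesis to fix generators $I = (t_1, \ldots, t_r)$ and produce $\hat{y}_1, \ldots, \hat{y}_r \in \hat{M}$ with $\hat{m} = \sum_j t_j \hat{y}_j$. The $\hat{y}_j$ will be assembled from lifts $y_{k, j} \in M$ (for $k \geq 0$, $1 \leq j \leq r$) chosen inductively to satisfy (a) $\sum_j t_j y_{k, j} \equiv m_{k+1} \pmod{I^{k+1} M}$ and (b) $y_{k+1, j} \equiv y_{k, j} \pmod{I^k M}$. Condition (b) makes $\hat{y}_j := (y_{k, j} \bmod I^k M)_{k \geq 1}$ a coherent sequence in $\hat{M}$, while (a), reduced modulo $I^k M$, yields $\sum_j t_j \hat{y}_j = \hat{m}$.

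To construct the $y_{k,j}$, the base case $k = 0$ is handled by $y_{0, j} = 0$ (since $m_1 = 0$). For the inductive step, (a) implies $m_{k+2} - \sum_j t_j y_{k, j}$ lies in $I^{k+1} M / I^{k+2} M$; since $I^{k+1} M = \sum_j t_j \cdot I^k M$ (this is where finite generation of $I$ is essential), this difference can be rewritten as $\sum_j t_j e_j$ with $e_j \in I^k M$, and setting $y_{k+1, j} := y_{k, j} + e_j$ preserves (a) and establishes (b).

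The main point to watch is the one-step lag between the accuracy required by (a) (modulo $I^{k+1} M$) and the coherence required by (b) (modulo $I^k M$); this gap is exactly what enables the inductive refinement to be performed with $e_j \in I^k M$, and is the step where the argument would break if finite generation of $I$ were dropped or the indices were misaligned.
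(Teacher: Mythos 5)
Your proof is correct and follows essentially the same route as the paper: both reduce the lemma to showing that the kernel of the first-coordinate map $\hat{M} \to M/IM$ is exactly $I\hat{M}$ (the retraction $\overline{g}\circ f = \mathrm{id}$ being the easy half), and both prove the hard inclusion by successive approximation using a finite generating set of $I$. The only difference is bookkeeping: the paper writes the element as a convergent multiset-indexed series and then regroups it by generator, whereas you assemble the $r$ coefficient elements directly as coherent sequences, which neatly sidesteps the series-rearrangement step.
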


\begin{proof}
The exact sequence $0 \to IM \to M \stackrel{\pi}{\rightarrow} M/IM \to 0$
yields an exact sequence of completions
$$
0 \to \widehat{IM} \to \hat{M} \stackrel{\hat{\pi}}{\rightarrow} \widehat{M/IM} = M/IM \to 0
$$
(see \cite[Cor.~10.3, 10.4]{AM:69}).  The map $\hat{\pi}: \hat{M} \to M/IM$ takes
the coherent sequence
$\mu = (m_1, m_2, \ldots)$ to $m_1$.
We claim that the subspaces $\widehat{IM}$ and $I \hat{M}$
of $\hat{M}$ are equal.  This suffices, for then the map
$p: \hat{M}/I \hat{M} \to M/IM$ (induced from $\hat{\pi}$) is an isomorphism.
Indeed, the map
$f: M/IM \to \hat{M}/I \hat{M}$ is induced from the map $M \to \hat{M}/I \hat{M}$
taking $m$ to $(m_1, m_2, \ldots)$, where we set $m_k = m \mbox{ mod } I^k M$.
Since $p \circ f$ is the identity map of $M/IM$, the claim implies that $f$ is
an isomorphism.

It remains to prove the claim.  As noted above, $\mbox{ker }\hat{\pi}
= \widehat{IM}$.  Clearly $I \hat{M} \subseteq \mbox{ker }\hat{\pi}$, so
we must show the reverse inclusion.

Given an element $\mu = (m_1, m_2, \ldots) \in \hat{M}$, let
$p_k(\mu) = m_k \in M/I^k M$.  Let $a_1, \ldots, a_n$ generate $I$.
Suppose that $\mu \in \mbox{ker }\hat{\pi}$.  We want to show that
$\mu \in I \hat{M}$.  Now, $p_1(\mu) = 0$, and
$p_2(\mu) \in IM/I^2 M$.  Let $\mu^1, \ldots, \mu^n$ be
elements of $M$ such that $\sum a_i \mu^i \mbox{ mod } I^2 M
= p_2(\mu)$.  Let $\hat{\mu}^i$ be the image of
$\mu^i$ under $M \to \hat{M}$, and let
$$
\mu(2)  = \mu - \sum a_i \hat{\mu}^i.
$$
Then $p_1(\mu(2)) = p_2(\mu(2)) = 0$, so $p_3(\mu(2)) \in I^2 M / I^3 M$.  
Let $\mu^{ij}$ be elements of $M$ such that 
$\sum a_i a_j \mu^{ij} \mbox{ mod } I^3 M = p_3(\mu(2))$.  Let
$\hat{\mu}^{ij}$ be the image of $\mu^{ij}$ under $M \to \hat{M}$, and
let
$$
\mu(3)  = \mu(2) - \sum a_i a_j \hat{\mu}^{ij}.
$$
Then $p_i(\mu(3)) = 0$ for $i \le 3$.
Proceeding inductively, suppose
we have $\mu(k) \in \hat{M}$ with $p_i(\mu(k))=0$ for
$i \le k$.  Then 
we can find elements $\mu^J \in M$, where
$J$ runs over the collection of
all $k$-element multisets with elements in $\{ 1,2,\ldots,n \}$, such
that if we define
$$
\mu(k+1) = \mu(k) - \sum_{|J| = k} a^J \hat{\mu}^J,
$$
then we have $p_j(\mu(k+1))=0$ for $j \le k+1$.  (Here 
$|J|$ is the number of elements in $J$, counted with multiplicity;
$a^J = \prod_{j \in J} a_j$,
where each $a_j$ occurs with its multiplicity in $J$; and $\hat{\mu}^J$ is the
image of $\mu^J$ under $M \to \hat{M}$.)  Then
$$
\mu =  \sum_k \sum_{|J| = k}  a^J \mu^J;
$$
that is, the right hand side converges to the element $\mu \in \hat{M}$.  
Let $S_i$ be the collection of multisets whose smallest element is
$i$.  We can rewrite the preceding equation as
$$
\mu = a_1 \sum_{J \in S_1} a^{J - \{ 1 \}} \mu^J + 
a_2 \sum_{J \in S_2} a^{J - \{2 \} } \mu^J + \cdots
+ a_n \sum_{J \in S_n} a^{J - \{n \} } \mu^J.
$$
Each of the
series $\sum_{J \in S_i} a^{J - \{ i \} } \mu^J$ converges to an element of $\hat{M}$, 
so we conclude that $\mu \in I \hat{M}$, as desired.
\end{proof}

\begin{remark}
In the proof of the lemma, 
the claim that $\widehat{IM} = I \hat{M}$ admits a simpler proof
if $M$ is finitely generated.  Indeed, in this case the horizontal maps
in the following commutative diagram are isomorphisms
(\cite[Prop.~10.13]{AM:69}):
$$
\begin{array}{ccc}
\hat{R} \otimes_R IM & \rightarrow & \widehat{IM} \\
\downarrow &   & \downarrow \\
\hat{R} \otimes_R M & \rightarrow & \hat{M} .
\end{array}
$$
The image in $M$ of $\hat{R} \otimes_R IM$ under the upper
(resp.~lower) composition is $\widehat{IM}$ (resp.~$I \hat{M}$),
so $\widehat{IM} = I \hat{M}$ as desired.
\end{remark}

\section{Proof of Theorem \ref{t.mainthm}}
In this section we work with rational coefficients and tensor all Grothendieck
groups and Chow groups with $\Q$.   For simplicity we will omit this
from the notation and simply write, for example, $G(G,X)$ for
$G(G,X)_{\Q}$, or $R$ for $R_{\Q}$.  If $M$ is an $R$-module we will write
$M/I$ for $M/IM$, and if $N$ is an $S$-module we will write
$N/J$ for $N/JN$.  Recall that by Corollary \ref{c.topologies}
we can identify the $J$-adic completion of $CH^*_G(X)$
with the direct product $\prod_{i = 0}^{\infty} CH^i_G(X)$.

By Proposition \ref{p.forget}, we have a commutative diagram
$$
\begin{CD}
G(G,X)  @> {\tau_X^G}>>  \prod_i CH^i_G(X) \\
@V{\For}VV                               @VV{\For}V \\
G(X)  @>{\tau_X}>>  \prod_i CH^i(X).
\end{CD}
$$
Now, $\tau^G_X$ takes $I G(G,X)$ to $J \prod CH^i_G(X)$.  Also, the
forgetful maps factor as
$$
G(G,X) \to G(G,X)/I \to G(X)
$$
and
$$
\prod_i CH^i_G(X) \to ( \prod_i CH^i_G(X) )/J \to CH^*(X).
$$
Therefore, we obtain a commutative diagram
\begin{equation} \label{e.cd1}
\begin{CD}
G(G,X)/I  @> {\bar{\tau}_X^G}>>  (\prod_i CH^i_G(X))/J \\
@VVV                               @VVV \\
G(X)  @>{\tau_X}>>  \prod_i CH^i(X),
\end{CD}
\end{equation}
where $\bar{\tau}_X^G$ is induced from $\tau_X^G$.  
The map $\tau_X$ is an isomorphism (see \cite[Corollary 18.3.2]{Ful:84}).  
We claim that $\bar{\tau}_X^G$ is as well.  Indeed, $\tau_X^G$ factors as
$$
G(G,X) \to \widehat{G(G,X)} \stackrel{\hat{\tau}^G_X}{\rightarrow}
\prod_i CH^i_G(X).
$$
and the map $\hat{\tau}^G_X$ is an isomorphism.  As observed
in Section \ref{s.prelim}, if we use $\mbox{ch}_G$
to identify $\hat{R}$ with $\hat{S}$, then $\hat{\tau}^G_X$ is an isomorphism
of $\hat{R} = \hat{S}$-modules.  
Hence $\hat{\tau}^G_X$ induces an isomorphism
$$
 \widehat{G(G,X)}/I \to \left(\prod_i CH^i_G(X) \right)/J.
 $$
(Here we are using the fact that $\hat{I} = I \hat{R}$, so
$ \widehat{G(G,X)}/ \hat{I} =  \widehat{G(G,X)}/I$; 
similarly, $(\prod_i CH^i_G(X))/\hat{J} = (\prod_i CH^i_G(X))/J)$.
We can write $\bar{\tau}_X^G$ as the composition
$$
G(G,X) / I \to \widehat{G(G,X)}/I \to (\prod_i CH^i_G(X))/J.
$$
Since the second map is an isomorphism, and by Lemma \ref{l.completions}
the first map is an isomorphism as well,
we conclude that $\bar{\tau}_X^G$ is an isomorphism, proving the claim.

Now, we have a commutative diagram
$$
\begin{CD} \label{e.cd1}
CH^*_G(X) @>>> \prod_i CH_G^i(X) \\
@V{\For}VV           @VV{\For}V \\
CH^*(X) @= \prod_i CH^i(X)
\end{CD}
$$
(the bottom equality is because $CH^i(X)$ is zero for $i<0$ or $i> \mbox{dim }X$).  
From this we obtain a commutative diagram
\begin{equation} \label{e.cd2}
\begin{CD}
CH^*_G(X)/J @>>> (\prod_i CH_G^i(X))/J \\
@VVV           @VVV \\
CH^*(X) @= \prod_i CH^i(X).
\end{CD}
\end{equation}
The top map is an isomorphism by 
Corollary \ref{c.topologies} and Lemma \ref{l.completions}, and
Brion proved that the left vertical map is an isomorphism.  Hence,
combining diagrams \eqref{e.cd1} and \eqref{e.cd2}, we obtain
a commutative diagram
\begin{equation} \label{e.cd3}
\begin{CD}
G(G,X)/I  @>>>  CH^*_G(X)/J \\
@VVV                               @VVV \\
G(X)  @>{\tau_X}>>   CH^*(X),
\end{CD}
\end{equation}
Since the top, bottom, and right vertical maps are isomorphisms,
we conclude that the left vertical map is an isomorphism as well.
This completes the proof.

\begin{example}
We return to the example of $G = \PGL(2)$ acting on $\Pro^1$,
considered in the introduction.  Let $B$ denote the stabilizer in
$G$ of the point $[1:0]$ and let $T$ denote the maximal torus which is the
image of the diagonal matrices in $\GL(2)$ under the quotient
map $\GL(2) \to \PGL(2)$.  Then $\Pro^1$ can be identified
with $G/B$, and a standard change of groups
argument (see e.g. \cite[Prop.~3.2]{EdGr:00}) implies
$$
G(G,G/B) = G(B, \mbox{pt}) = R(B) = R(T).
$$
Since we are working with rational coefficients, $R(T) \simeq \Q[u,u^{-1}]$
and this isomorphism can be chosen so that $u$ corresponds to
$[ {\mathcal O}_{\Pro^1}(2) ] $ in $G(G,\Pro^1)$.    We may view $R(G)$
as the subring $\Q[u+u^{-1}]$ of $R(T)$; then the ideal $I \subset
R(G)$ is generated by $u+u^{-1} - 2$, so 
$G(G,\Pro^1)/I = \Q[u,u^{-1}]/\langle (u-1)^2 \rangle$.  Also,
if $v = [ {\mathcal O}_{\Pro^1}(1) ]  \in G(\Pro^1)$, then $G(\Pro^1)
= \Q[v]/\langle (v-1)^2 \rangle$.  The forgetful map $G(G, \Pro^1) \to
G(\Pro^1)$ takes $u$ to $v^2$, and induces an isomorphism
$G(G,\Pro^1)/I \simeq G(\Pro^1)$.  However, if we were working
with integer coefficients, the forgetful map would not be surjective,
since in that case $v$ is not in the image. 
\end{example}

\def\cprime{$'$}

\end{document}